\newtheorem*{lemma}{Statement}
\newtheorem*{definition}{Definition}
\newtheorem*{remark}{Remark}
\newtheorem{corollary}{Corollary}
\newtheorem*{application}{Application}
\newtheorem{theoreme}{Theorem}
\newtheorem*{conjecture}{Conjecture}
\newtheorem*{lemma1}{Lemma}
\title{Some Results on Zero-Sum Sequences in $Z_{p}^{3}$}
\author{\bf {Satwik Mukherjee}\\
University of Lille1\\
 Email  : satwiklille1@gmail.com}
\date{September 9, 2013}
\begin{document}

\maketitle

\begin{abstract}
 Kemnitz Conjecture [9] states that if we take a sequence of elements in $Z_{p}^{2}$ of length $4p-3$,
 $p$ is a prime number, then it has a subsequence of length $p$, whose sum is $0$ modulo $p$. It is
 known that in $Z_{p}^{3}$ to get a similar result we have to take a sequence of length atleast $9p-8$
 . In this paper we will show that if we add a condition on the chosen sequence, then we can get a good upper and a lower bound for
 which  similar results hold. 
\end{abstract}

\section*{Introduction}

 Denoting by $s_{k}(Z_{n}^{d})$  the smallest integer $t$ such that any set of $t$ lattice-points in the $d$-dimensional
 Euclidean space contains a subset of cardinality $kn$, the sum of whose elements is divisible by $n$,
 it was first proved by Erdős, Ginzburg and Ziv   [4], that $s_{1}(Z_{n})=2n-1$. 
 Kemnitz' Conjecture $s_{1}(Z_{n}^{2})=4n-3$ was open for about twenty years and was proved by Reiher in [9] after 
 a series of results by Gao [5], Rónyai [10] and others. Up to now the best general bounds for odd primes $p$ and higher dimensions $d$ are
 $s_{1}(Z_{p}^{d})\geqslant 1.125^{\lfloor \frac{d}{3}\rfloor}2^{d}(p-1)+1$,
by Elsholtz [3],  and $ s_{1}(Z_{p}^{d})\leqslant (cd\log d)^{d}p$ by Alon and Dubiner [1], where $c$ is a constant. 
They conjectured that $s_{1}(Z_{p}^{d})\leqslant c^{d}p$. In $2001$, Elsholtz [3] showed that $s_{1}(Z_{n}^{3})\geqslant 9n-8$. Bhowmik and Schalge-Puchta [2]
proved that $s_{1}(Z_{p}^{3})=9p-8$ for $p\xrightarrow{}\infty$, $p$ is a prime number. 
Hence, it is natural to ask whether $s_{1}(Z_{p}^{3})=9p-8$ for all $p$. We are as yet unable to answer this question. However, we study the constant $s_{I}(Z_{p}^{3})$ for 
certain sequences $I$. Kubertin [8], Gao, Thangadurai [6] and Geroldinger, Grynkiewicz, Schmid [7] have studied some properties
of these kind of constants. Gao, Thangadurai [6] studied this constant for groups $G\cong Z_{n}^{d}$
when $d$=3 or 4 and proved that $s_{k}(Z_{p}^{3})=kp+3p-3$ for every $k\geqslant 4$, where $p$
is a prime number. Kubertin [8] further extended this result by proving that $s_{k}(Z_{q}^{3})=(k+3)q-3$ 
for $k\geqslant 3$ and $q$ be a prime power of $p>3$ and 
$s_{k}(Z_{q}^{4})=(k+4)q-4$ for $k\geqslant 4$ and $p\geqslant 7$, $p$ is a prime number and $q$
is a prime power of $p$. She conjectured that for positive integers $k\geqslant d$ and $n$
we have $s_{k}(Z_{n}^{d})=(k+d)n-d$ and proved that the conjecture holds for $s_{np}(Z_{q}^{d})$ where
$p$ is a prime number and $q$ is a power of $p$. Geroldinger, Griynkiewicz, Schmid [7] defined 
for a finite abelian group $G$ and a positive integer $d$, $s_{d\mathbb{N}}(G)$ to be the smallest 
integer $l\in \mathbb{N}_{0}$ such that every sequence $S$ over $G$ of length $|S|\geqslant l$ has a non-empty
zero-sum subsequence $T$ of length $|T|\equiv 0$ mod $d$. They showed that, 
$\\$Let $d\in \mathbb{N}$ and let $n=exp(G)$. Suppose $G$ is cyclic. Then $s_{d\mathbb{N}}(G)=lcm(n,d)+gcd(n,d)-1$.

They also determined $s_{d\mathbb{N}}(G)$ for all $d\geqslant1$ when $G$ has rank atmost two and, under mild
conditions on $d$, and obtained precise values in the case of $p-groups$. Continuing on this line in this paper we give an upper bound and a lower bound for $s_{1}(Z_{n}^{3})$ of a particular
kind of sequences. We have used the idea of `lifting of an equation' (Explained Later) by Reiher [9] for studying
 some properties of the sequences in $Z_p^3$ and have generalized the function used in Ronyai's Method [8]
  to prove one of our theorems. We have used the `Polynomial Methods' to study the zero-sum properties 
 of the sequences in $Z_p^3$. We must note that $s_{1}(Z_{n}^{d})$ is a completely multiplicative function of $n$. Here,
 we will prove our results for the prime numbers $p$ which essentially proves for the other
 integers n also. Throughout the text $p$ denotes a prime number and n stands for any integer.

\section*{Main Results and Two Applications of Polynomial Methods}
  
\subsection*{Main Theorems}
If $I$ is a sequence of elements in $Z_{p}^{3}$, then $N^{kp}(I)$ denotes the number of subsequences of $I$ of length $kp$,
whose sum is $0$ modulo $p$. By $a\equiv b$, we mean $a\equiv b$ mod $p$. In this paper , we will prove the following three theorems :

\begin{theoreme}
Let  $J$ be a sequence of elements in $Z_{p}^{3}$ with $|J|=7p-3$. Let, for all $ I \subset J$ with $|I|=4p-3$, $N^{2p}(I)\equiv c$ mod p, where c is a fixed number, then $J$ has a subsequence  of length $p$,
whose sum is 0 modulo p when $p>7$.
\end{theoreme}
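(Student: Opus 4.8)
The plan is to argue by contradiction. Assume $J$ has no zero-sum subsequence of length $p$; then no subsequence $I\subseteq J$ has one either, so $N^{p}(I)=0$ for every $I\subseteq J$. The first step is to convert the hypothesis into congruences for the numbers $N^{kp}(I)$. Fix $I\subseteq J$ with $|I|=4p-3$, attach a variable $x_i$ to each element of $I$, and apply the Chevalley--Warning theorem to the four polynomials $\sum_i a_i^{(j)}x_i^{p-1}$ ($j=1,2,3$) and $\sum_i x_i^{p-1}$ over $\mathbb{F}_p$; their total degree $4(p-1)$ is strictly below $|I|=4p-3$, so the number of common zeros is divisible by $p$. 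Sorting the zeros by the size of their support --- which the last equation forces to be $\equiv 0 \bmod p$, hence one of $0,p,2p,3p$ --- and weighting by $(p-1)^{|\cdot|}\equiv(-1)^{|\cdot|}$, one gets $1-N^{p}(I)+N^{2p}(I)-N^{3p}(I)\equiv 0\pmod p$. Since $N^{p}(I)=0$ and $N^{2p}(I)\equiv c$ by hypothesis, this pins down $N^{3p}(I)\equiv 1+c\pmod p$ for every $(4p-3)$-subsequence $I$ of $J$.

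The second step propagates this information up to $J$. For a zero-sum subsequence $S$ of size $kp$ and any $K$ with $S\subseteq K\subseteq J$, double counting gives $\sum_{I:\,S\subseteq I\subseteq K,\ |I|=r}1=\binom{|K|-kp}{r-kp}$, hence $\sum_{I\subseteq K,\ |I|=r}N^{kp}(I)=\binom{|K|-kp}{r-kp}N^{kp}(K)$. Evaluating these binomials modulo $p$ by Lucas' theorem --- using the base-$p$ expansions $7p-3=(6,p-3)_{p}$, $5p-3=(4,p-3)_{p}$, and so on, which already requires $p>7$, since for $p\le 7$ the shapes of these expansions change --- and running the Chevalley--Warning congruence of the first step over subsequences of sizes $4p-3,\ 5p-3,\ 6p-3$ and over $J$ itself, I would solve the resulting triangular system to express each of $N^{2p}(J),N^{3p}(J),\dots,N^{6p}(J)$ as an explicit affine function of $c$ modulo $p$; for instance $N^{2p}(J)\equiv 5c$ and $N^{3p}(J)\equiv 20(1+c)$.

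These Chevalley--Warning-type relations together with the hypothesis leave exactly one free parameter, the residue $c$, so a further and genuinely independent congruence is needed to close the argument, and producing it is the heart of the proof. Here I would use Reiher's device of lifting a zero-sum equation: fixing integer representatives $\tilde a_i$ of the elements of $J$, every zero-sum subsequence $S$ satisfies $\sum_{i\in S}\tilde a_i=p\,b_S$ for some $b_S\in Z_p^3$, and keeping track of $b_S$ carries information beyond ``length divisible by $p$.'' Combined with a generalization to $Z_p^3$ of the polynomial underlying R\'onyai's method --- a product of Fermat-type factors, one block per coordinate, times a suitable power of $\sum_i x_i$, designed so that its zero set records zero-sum subsequences of length \emph{exactly} $p$ --- this should yield a congruence for $N^{p}(J)$ that is not a consequence of the earlier ones. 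Substituting the affine-in-$c$ formulas from the second step then forces $0\equiv(\text{unit})\pmod p$, the desired contradiction; the primes $p\le 7$ fall outside the argument precisely because for them the bookkeeping degenerates --- a binomial coefficient from Lucas' theorem vanishes, or a coefficient that must be inverted is $0$ in $\mathbb{F}_p$, or a degree inequality demanded by the polynomial method fails. I expect this last step --- engineering the lifted R\'onyai-type polynomial so that the congruence it produces is genuinely new, and then checking that the collapse to a contradiction really persists for all $p>7$ --- to be the main obstacle. The degenerate case $\sum_{i\in J}a_i=0$, if it intervenes, can be removed beforehand by a translation, which alters none of the quantities $N^{kp}$.
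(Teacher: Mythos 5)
Your first two steps are sound and reproduce the paper's preparatory machinery: the Chevalley--Warning congruence $1-N^{p}(I)+N^{2p}(I)-N^{3p}(I)\equiv 0$ for $|I|=4p-3$, and the lifted affine-in-$c$ relations (your spot values $N^{2p}(J)\equiv 5c$, $N^{3p}(J)\equiv 20(1+c)$ agree with the paper's Corollary on $7p-3$ combined with $2m\equiv 10c$). But the proof stops exactly where the proof has to be given. You correctly observe that the counting congruences leave the free parameter $c$ and are mutually consistent, and you then defer the closing step to a ``genuinely new'' congruence for $N^{p}(J)$ to be extracted from a lifted R\'onyai-type polynomial over $Z_p^3$ --- admitting yourself that engineering it is the main obstacle. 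That is a genuine gap, not a routine detail: evaluating such a R\'onyai-type sum in $Z_p^3$ is precisely the difficulty the paper itself cannot overcome (its Theorem 3 machinery with the polynomials $P_{ij}$ yields only a weak dichotomy because the sum $m=\Sigma_x P_{18}(x)$ cannot be pinned down), so your plan outsources the theorem to something at least as hard as the open problem $s_1(Z_p^3)=9p-3$. No contradiction from congruences alone is needed or obtained in the paper.

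What the paper actually does, and what is missing from your proposal, is a complementation argument inside zero-sum subsequences of length divisible by $p$. If $J$ contains a zero-sum subsequence $I$ with $|I|=6p$, then either $I$ contains a zero-sum subsequence of length $5p$ (its complement in $I$ has length exactly $p$, done), or else every $I'\subset I$ with $|I'|=6p-3$ has $N^{5p}(I')\equiv t+6\equiv 0$, forcing $t\equiv -6$, hence $5c\equiv -9$ and $N^{4p}(I')\equiv 3t+15\equiv -3\neq 0$; inside the resulting zero-sum $I''$ of length $4p$, any $L\subset I''$ with $|L|=4p-3$ satisfies $1+c-N^{3p}(L)\equiv 0$, and $5c\equiv-9$ makes $N^{3p}(L)\not\equiv 0$, so $I''$ minus a $3p$ zero-sum subsequence is a zero-sum of length $p$. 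If instead $N^{6p}(J)=0$, then $m\equiv -10$, so $N^{5p}(J)\equiv 4m+36\equiv -4\neq 0$ and $r\equiv 2c\equiv -4$, and the same subtraction trick inside a $5p$ zero-sum subsequence finishes. In short, the nonvanishing mod $p$ of suitable $N^{kp}$'s is used to \emph{produce} zero-sum subsequences whose set differences have length exactly $p$; no independent congruence for $N^{p}(J)$ is ever required. Your proposal lacks this idea, and without it the argument does not close.
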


\begin{theoreme}
 There is a sequence of elements $J$ in $Z_{p}^{3}$ with $|J|=4p-4+\frac{p-1}{2}$ such that for all $ I \subset J$ with $|I|=4p-3$, $N^{2p}(I)\equiv 0$ mod p,
 and $J$ does not have any subsequence  of length $p$ , whose sum is 0 modulo p.
\end{theoreme}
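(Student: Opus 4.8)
The plan is to produce the sequence $J$ explicitly rather than through any counting argument. Assume $p$ is odd and let $J$ be the sequence over $Z_{p}^{3}$ consisting of $p-1$ copies of each of $(0,0,0),\,(1,0,0),\,(0,1,0),\,(0,0,1)$ together with $\frac{p-1}{2}$ copies of $(1,1,1)$; then $|J|=4(p-1)+\frac{p-1}{2}=4p-4+\frac{p-1}{2}$, as demanded. A subsequence of $J$ is recorded by a tuple $(n_{0},n_{1},n_{2},n_{3},n_{4})$ of multiplicities of $(0,0,0),(1,0,0),(0,1,0),(0,0,1),(1,1,1)$ with $0\le n_{0},n_{1},n_{2},n_{3}\le p-1$ and $0\le n_{4}\le\frac{p-1}{2}$, and its sum is $(n_{1}+n_{4},\,n_{2}+n_{4},\,n_{3}+n_{4})$.

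First I would check that $J$ has no zero-sum subsequence of length $p$. If $n_{0}+n_{1}+n_{2}+n_{3}+n_{4}=p$ and the sum vanishes mod $p$, then each $n_{j}+n_{4}\equiv 0$; since $n_{j}+n_{4}\le(p-1)+\frac{p-1}{2}<2p$, each $n_{j}+n_{4}$ is $0$ or $p$. Three zeros force all of $n_{1},n_{2},n_{3},n_{4}$ to be $0$ and then $n_{0}=p$, impossible; a value $0$ together with a value $p$ is impossible ($n_{j}+n_{4}=0$ forces $n_{4}=0$, and then $n_{k}+n_{4}=p$ forces $n_{k}=p$); and three $p$'s give $n_{1}=n_{2}=n_{3}=p-n_{4}$, hence $n_{0}=p-3(p-n_{4})-n_{4}=2(n_{4}-p)<0$, impossible. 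So no such subsequence exists.

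The essential point is the stronger statement that $J$ has no zero-sum subsequence of length $2p$ whatsoever. The same case analysis with $\sum_{i}n_{i}=2p$ again forces each $n_{j}+n_{4}\in\{0,p\}$; ``all zero'' now yields $n_{0}=2p$, a mixed pattern is impossible as above, while ``all $p$'' gives $n_{1}=n_{2}=n_{3}=p-n_{4}$ and $n_{0}=2p-3(p-n_{4})-n_{4}=2n_{4}-p$, which is negative precisely because $n_{4}\le\frac{p-1}{2}$. Hence no length-$2p$ subsequence of $J$ sums to $0$, so $N^{2p}(I)=0$ as an integer (not merely $\equiv 0$ mod $p$) for every subsequence $I$ of $J$, in particular for every $I\subset J$ with $|I|=4p-3$. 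This settles both requirements, uniformly for all $p\ge 3$ (when $p=3$ the only admissible $I$ is $J$ itself).

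I do not anticipate a real analytic obstacle; the only genuine difficulty is finding the construction. The useful realisations are (i) that it is far easier to destroy all length-$2p$ zero-sums outright than to arrange $N^{2p}(I)$ to be a nonzero multiple of $p$, and (ii) that the multiplicity $\frac{p-1}{2}$ on $(1,1,1)$ is exactly the largest for which this works: with $\frac{p+1}{2}$ copies one already has the length-$2p$ zero-sum given by $n_{4}=\frac{p+1}{2}$, $n_{1}=n_{2}=n_{3}=\frac{p-1}{2}$, $n_{0}=1$. Once $J$ is chosen, the whole verification is a two-line case check with no polynomial machinery, in contrast with the proof of Theorem 1.
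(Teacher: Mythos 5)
Your proposal is correct and uses exactly the construction in the paper: $p-1$ copies each of $(0,0,0),(1,0,0),(0,1,0),(0,0,1)$ plus $\frac{p-1}{2}$ copies of $(1,1,1)$, with the observation that this sequence has no zero-sum subsequence of length $p$ or $2p$ at all, so $N^{2p}(I)=0$ identically. The only difference is that you spell out the multiplicity case analysis that the paper leaves as an unproved assertion.
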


  We also have investigated the case when $N^{p}(J)=0$, $J$ is a sequence of  $Z_{p}^{3}$. For such a sequence $J$ with $|J|=9p-3$,
 we checked whether it has  the subsequences  of length $ip$,
whose sum is 0 modulo p, for  $2\leqslant i\leqslant 8$. And we 
 have come up with the result :
 \begin{theoreme}Let $J$ = $ \{(a_{i}, b_{i}, c_{i}) $ , $1\leqslant i \leqslant (9p-3)\}$ be a sequence in $Z_{p}^{3}$.
 We have either $N^{p}(J)>0$ or six of the $N^{ip}(J)$'s are not congruent to 0, $2\leqslant i\leqslant 8$.
\end{theoreme}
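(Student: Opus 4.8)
\medskip

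The plan is to use the polynomial (Chevalley–Warning style) method together with a counting identity over $\mathbb{Z}_p$, exactly in the spirit of the proofs of Theorems 1 and 2. Write $J=\{(a_i,b_i,c_i):1\le i\le 9p-3\}$ and introduce indeterminates $x_1,\dots,x_{9p-3}$. The basic device is the standard one: for a subset $\varnothing\ne T\subseteq\{1,\dots,9p-3\}$ the sum $\sum_{i\in T}(a_i,b_i,c_i)$ is zero modulo $p$ exactly when the three congruences $\sum_{i\in T}a_i\equiv\sum_{i\in T}b_i\equiv\sum_{i\in T}c_i\equiv 0$ hold, and by Fermat each of these is detected by a $(p-1)$-st power. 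First I would set up the three polynomials
\[
f_1(x)=\Bigl(\sum_{i}a_i x_i\Bigr)^{p-1},\quad f_2(x)=\Bigl(\sum_{i}b_i x_i\Bigr)^{p-1},\quad f_3(x)=\Bigl(\sum_{i}c_i x_i\Bigr)^{p-1},
\]
together with the ``length'' polynomial, and combine them so that for a $0$--$1$ vector $x$ supported on $T$ the product $\bigl(1-f_1(x)\bigr)\bigl(1-f_2(x)\bigr)\bigl(1-f_3(x)\bigr)$ equals $1$ precisely when $\sum_{i\in T}(a_i,b_i,c_i)\equiv 0$. Summing this quantity (suitably weighted by a factor isolating $|T|=ip$) over all $x\in\{0,1\}^{9p-3}$ then expresses each $N^{ip}(J)$ as a polynomial-coefficient count.

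\medskip

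The second step is to exploit the hypothesis $N^p(J)=0$. Feeding this into the identity above forces a vanishing relation among the top-degree coefficients of the relevant product polynomial; concretely, the total degree $3(p-1)$ of $(1-f_1)(1-f_2)(1-f_3)$ is small compared with the number $9p-3$ of variables, so when one extracts the coefficient governing $N^p(J)$ one gets a nontrivial algebraic constraint. I would then run the same extraction for each $i$ with $2\le i\le 8$, obtaining an expression for $N^{ip}(J)\bmod p$ as a fixed $\mathbb{Z}_p$-linear (indeed, universal) combination of the same family of coefficient-data. The key combinatorial input is the identity that ties together the numbers $\binom{9p-3}{kp}$ and the binomial coefficients $\binom{9p-3-jp}{kp}$ through Lucas' theorem: modulo $p$ these are controlled by the base-$p$ digits of $9p-3=8p+(p-3)$, i.e. digits $(8,\,p-3)$, and a short computation pins down which of them are nonzero. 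This is what converts the single hypothesis $N^p(J)=0$ into a statement forcing \emph{at least six} of the seven quantities $N^{2p}(J),\dots,N^{8p}(J)$ to be nonzero mod $p$: if too many of them vanished, the linear system would force the degree-$3(p-1)$ data to vanish as well, contradicting a nonvanishing coefficient coming from the ``all variables distinct'' term.

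\medskip

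In more detail, after the substitution one is left with a polynomial $P(x)$ of degree $\le 3(p-1)+1$ in $9p-3$ variables whose value on each $0$--$1$ point records membership of the corresponding subsequence in one of the zero-sum classes; the Combinatorial Nullstellensatz / Alon–Füredi type counting then says that the alternating sum $\sum_{x\in\{0,1\}^{9p-3}}P(x)$ vanishes mod $p$ because the degree is far below $9p-3$. Writing this vanishing out and grouping the $0$--$1$ points by the size $|T|$ of their support, the contribution from $|T|=kp$ comes with coefficient $\binom{9p-3-kp+(\text{shift})}{\cdot}$, and the Lucas-theorem analysis of the digits $(8,p-3)$ shows these coefficients are units for the relevant range of $k$. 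Hence the relation reads, schematically, $\sum_{k=0}^{8}(\text{unit}_k)\,N^{kp}(J)\equiv 0\pmod p$ with $N^0(J)\equiv 1$ and $N^{9p}(J)\equiv\binom{9p-3}{9p}=0$; since the $k=0$ term is $\equiv 1\ne 0$, not all of $N^{p}(J),\dots,N^{8p}(J)$ can be $0$, and a refinement of the argument (using a \emph{second} auxiliary polynomial, e.g. multiplying by one extra linear form to shift the degree by $1$, which kills exactly one more of the $N^{kp}$ contributions) upgrades ``not all vanish'' to ``at most one vanishes among the seven with $2\le i\le 8$,'' i.e. six of them are nonzero unless $N^p(J)>0$.

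\medskip

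The main obstacle I anticipate is making the passage from ``one linear relation among the $N^{kp}(J)$'' to ``six of the seven are nonzero'' genuinely rigorous: a single relation only rules out all seven vanishing simultaneously, so one needs to produce \emph{several independent} such relations — presumably by running the polynomial method with a small family of degree-raising multipliers (linear forms, or the length polynomial raised to low powers) and then arguing that the resulting system of congruences, whose coefficient matrix is a Vandermonde-like or binomial matrix that is invertible mod $p$ for $p>$ some small bound, cannot be satisfied with six or more of the $N^{ip}(J)$ equal to zero unless $N^p(J)>0$. Controlling the nonsingularity of that matrix (again via Lucas' theorem on the digits of $9p-3$) and checking the small-prime cases is where the real work lies; the rest is bookkeeping analogous to the proofs of Theorems 1 and 2.
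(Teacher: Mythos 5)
There is a genuine gap, and you partly flag it yourself: the single Chevalley--Warning/alternating-sum identity you set up only produces \emph{one} linear congruence of the form $\sum_{k}(\mathrm{unit}_k)\,N^{kp}(J)\equiv 0 \pmod p$, which rules out all of $N^{p}(J),\dots,N^{8p}(J)$ vanishing simultaneously but says nothing close to ``at most one of $N^{2p}(J),\dots,N^{8p}(J)$ vanishes.'' The ``refinement'' you invoke to bridge this --- a family of degree-raising multipliers giving several independent relations, plus invertibility of a Vandermonde-like binomial matrix via Lucas --- is exactly where the whole content of the theorem lies, and you neither construct the multipliers nor prove the nonsingularity nor explain how, even granted such a system, one would exclude two of the seven quantities vanishing at once. (Note also that the paper's earlier triangular-matrix computation already produces such a determined system when $N^p(J)=0$, expressing every $N^{ip}(J)$ in terms of $k=N^{2p}(J)$; but that route only yields \emph{five} guaranteed nonvanishing values, because $20k+336$ and $15k+252$ are proportional mod $p$ and can vanish together. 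So the linear-relations strategy, even pushed through, falls short of the stated conclusion.)

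The paper closes the gap with a different device, a generalization of R\'onyai's weight function rather than a system of linear relations. One sums over all of $Z_p^{9p-3}$ (not the $0$--$1$ cube), sets $g(x)=x_1^{p-1}+\dots+x_{9p-3}^{p-1}$, and for each pair $\{1,j\}$ forms $P_{1j}(x)=\binom{g(x)-1}{p-1}\,Q_1Q_2Q_3\prod_{i\neq 1,j}S_i$, where $S_i=\binom{g(x)}{p}-i$ and the $Q_t$ encode the three coordinate conditions. The crucial point is that the factors $S_i$ \emph{annihilate} every zero-sum subsequence whose length is $ip$ with $i\notin\{1,j\}$, so each sum collapses to $\sum_x P_{1j}(x)\equiv P_{1j}(0)+a_jN^{p}(J)+b_jN^{jp}(J)$, a relation in only two unknowns with $a_j,b_j$ units (multiples of $6!$, $p>7$). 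A degree count (the relevant difference has degree $(9p-4)(p-1)<(9p-3)(p-1)$, so its sum over $Z_p^{9p-3}$ vanishes) shows all seven sums $\sum_x P_{1j}(x)$ equal one common value $m$; since the constants $P_{1j}(0)$ are pairwise distinct mod $p$, if $N^p(J)=0$ then the seven quantities $b_jN^{jp}(J)=m-P_{1j}(0)$ are pairwise distinct, so at most one of them is $\equiv 0$, which is exactly the claim. This length-isolating mechanism is the idea missing from your proposal; without it, the passage from one (or several) global linear relations to ``six of the seven are nonzero'' does not go through.
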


We need the following definitions to prove the above theorem.

\begin{definition}We define g(x) = $ x_{1}^{p-1} +...+ x_{9p-3}^{p-1}$. And with the help of it we  define,

 $S_{i} = (\binom {g(x)} {p} - i)$, $Q_{1} = \binom{ a_{1}x_{1}^{p-1}+...+a_{9p-3}x_{9p-3}^{p-1}-1} {p-1}$, 
 
 $\\$
 
 $Q_{2} = \binom{ b_{1}x_{1}^{p-1}+...+b_{9p-3}x_{9p-3}^{p-1}-1} {p-1}$, $Q_{3} = \binom{ c_{1}x_{1}^{p-1}+...+c_{9p-3}x_{9p-3}^{p-1}-1} {p-1}$.
 
$\\$ 
 We also define a new function  $P_{18}(x) = \binom{g(x)-1} {p-1} Q_{1}Q_{2}Q_{3} S_{2}S_{3}...S_{7}$   where 
 only $S_{1}, S_{8}$ are missing. $P_{ij}(x)$ is defined similarly in which only $S_{i},S_{j}$ are missing.
 
 \end{definition}

  \begin{proof}[\bf Remark] ``By lifting one of the above equations'' follows the same technique that Reiher [9]
  used in his paper.
 
 As an example, if $J$ is a sequence in $Z_{p}^{2}$ of cardinality $3p-3$, then by 
 the polynomial method [9] we get the equation $1-N^{p-1}(J)-N^{p}(J)+N^{2p-1}(J)+N^{2p}(J)\equiv0$.
 Now, let $X$ be a sequence of $Z_{p}^{2}$ of cardinality $4p-3$. Then, we get:- 
 $\Sigma \{1-N^{p-1}(J)-N^{p}(J)+N^{2p-1}(J)+N^{2p}(J)\}\equiv0$, where the sum is extended over all
 $J\subset X$ of cardinality $3p-3$. Analysing the number of times each subsequence is counted we 
 obtain $\binom{4p-3} {3p-3}- \binom{3p-2}{2p-2}N^{p-1}(X)-\binom{3p-3}{2p-3}N^{p}(X)+\binom{2p-2}{p-2}N^{2p-1}(X)+\binom{2p-3}{p-3}N^{2p}(X)\equiv0$
 . After the reduction we get, $3- 2N^{p-1}(X)-2N^{p}(X)+N^{2p-1}(X)+N^{2p}(X)\equiv0$. This equation
 will be called a lifting of $1-N^{p-1}(J)-N^{p}(J)+N^{2p-1}(J)+N^{2p}(J)\equiv0$.
 \end{proof}
\subsection*{Existence of a $2p$-Zero Sum Sequence in $Z_p^3$}
\begin{lemma}  If I is a sequence of elements in $Z_{p}^{3}$ and $|I|=6p-3$, then  $I$ has a subsequence of 
length $2p$, whose sum is congruent to 0 modulo p.
\end{lemma}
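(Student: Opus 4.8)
The plan is to combine the polynomial method (Chevalley--Warning) with a combinatorial reduction that forces a zero-sum subsequence to have length exactly $2p$. Since $s_{1}(Z_{n}^{3})$ is multiplicative in $n$ it suffices to treat $n=p$ prime. Write $I=\{(a_{i},b_{i},c_{i}):1\le i\le 6p-3\}$ and consider over $\mathbb{F}_{p}$ the four polynomials
\[
P_{1}=\sum_{i}x_{i}^{p-1},\qquad P_{2}=\sum_{i}a_{i}x_{i}^{p-1},\qquad P_{3}=\sum_{i}b_{i}x_{i}^{p-1},\qquad P_{4}=\sum_{i}c_{i}x_{i}^{p-1},
\]
each of degree $p-1$ in the $6p-3$ variables $x_{i}$. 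As $\deg P_{1}+\deg P_{2}+\deg P_{3}+\deg P_{4}=4p-4<6p-3$, the number of common zeros in $\mathbb{F}_{p}^{6p-3}$ is divisible by $p$, and since the zero tuple is one of them there is a nonzero common zero; its support $S=\{i:x_{i}\neq 0\}$ is then a zero-sum subsequence of $I$ (each coordinate sum over $S$ vanishes since $x_{i}^{p-1}=1$ for $i\in S$) with $|S|\equiv 0 \pmod p$, so $|S|=kp$ with $1\le k\le 5$. If $k=2$ we are done.

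I would then reduce the remaining cases. The three-polynomial version of the same argument shows that any $3p-2$ elements of $Z_{p}^{3}$ contain a nonempty zero-sum (so $D(Z_{p}^{3})=3p-2$); hence as long as at most a single $p$-block has been removed, the complement (of size $\ge 5p-3$) still contains a zero-sum subsequence of length divisible by $p$. Moreover a zero-sum $S$ of length $kp$ with $k$ even must contain a proper zero-sum subsequence of length divisible by $p$: otherwise the common zeros of the four-polynomial system on $\mathbb{F}_{p}^{kp}$ would number $1+(p-1)^{kp}\equiv 1+(-1)^{k}\equiv 2\pmod p$, contradicting Chevalley--Warning. Consequently a block of length $4p$ splits off a block of length $p$, $2p$, or $3p$, and two disjoint $p$-blocks combine into a $2p$-block. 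Iterating these moves, we either obtain a $2p$-zero-sum (done) or are left with the rigid configuration: $I$ contains a ``critical'' zero-sum $S$ with $|S|=3p$ that has no proper zero-sum subsequence of length divisible by $p$ (hence no $p$- and no $2p$-sub-zero-sum), possibly together with one disjoint $p$-zero-sum, while $I$ itself has no $2p$-zero-sum.

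Ruling out this configuration is the main obstacle. The idea is to use the $3p-3$ leftover elements $R=I\setminus S$: adjoining $p+3$ elements of $S$ produces a set of size $4p\ge 4p-3$, to which the four-polynomial method applies, giving a zero-sum subsequence $T$ with $|T|\in\{p,2p,3p,4p\}$. The values $2p$ and $4p$ finish the proof by the splitting above, and $|T|=p$ yields a $p$-zero-sum of $I$ from which (combining with an existing $p$-block, or removing it and re-running the argument on the $5p-3$ remaining elements) one again reaches a $2p$-zero-sum; so necessarily $|T|=3p$, and then $|T\cap R|\ge 2p-3$. Comparing the three zero-sums $S$, $T$, and what $S$ and $R$ respectively contribute to $T$, and exploiting the very restrictive structure that a critical $3p$-block imposes on its multiset of elements (together with $D(Z_{p}^{3})=3p-2$), one should obtain either the required $2p$-zero-sum or a contradiction. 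I expect essentially the whole difficulty of the lemma to sit in this last step, namely in showing that a critical $3p$-block inside $6p-3$ elements of $Z_{p}^{3}$ cannot obstruct the existence of a $2p$-zero-sum.
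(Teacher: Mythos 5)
There is a genuine gap, and you have named it yourself: the entire argument funnels into the ``critical $3p$-block'' configuration (a zero-sum $S$ with $|S|=3p$ and no proper zero-sum sub-block of length divisible by $p$, inside $I$ with no $2p$-zero-sum), and at exactly that point you offer only the hope that ``one should obtain either the required $2p$-zero-sum or a contradiction.'' None of the moves you actually establish can force progress there: the Chevalley--Warning parity count $1+(-1)^{k}$ gives no contradiction for odd $k$, so a critical $3p$-block is not excluded by counting; adjoining $p+3$ elements of $S$ to $R$ can return $|T|=3p$ (your unresolved case) or $|T|=p$, and in the latter case ``remove it and re-run on the remaining $5p-3$ elements'' can simply reproduce another critical $3p$-block, so the iteration has no guarantee of terminating in success. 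Two smaller slips: your claim that every zero-sum block of even length $kp$ splits is unjustified for $k=2$ (there $kp=2p\le 4p-4$, so Chevalley--Warning does not apply; fortunately you only need $k=4$), and the listed moves never say what to do with a $5p$-block coming from the first application of the method.

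The paper closes exactly the hole you leave open, and it does so quantitatively rather than existentially. Working with the counts $N^{jp}$ modulo $p$, it combines the direct congruence for a sequence $J$ of length $5p-3$, namely $1-N^{p}(J)+N^{2p}(J)-N^{3p}(J)+N^{4p}(J)\equiv 0$, with the ``lifted'' congruence $4-3N^{p}(J)+2N^{2p}(J)-N^{3p}(J)\equiv 0$ obtained by averaging the length-$(4p-3)$ identity over all subsequences of that length. If $N^{p}(J)=N^{2p}(J)=0$ these give $N^{3p}(J)\equiv 4$ and then $N^{4p}(J)\equiv 3\not\equiv 0$, so a $4p$-zero-sum block exists, and (as you also show) a $4p$-block splits off a $p$- or $2p$-zero-sum --- a contradiction. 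Thus every sequence of length $5p-3$ already has a $p$- or $2p$-zero-sum, and the $6p-3$ case follows by extracting a $p$-block and repeating, then uniting two disjoint $p$-blocks. The congruence/lifting step is the missing mechanism in your proposal: it is what forces a $4p$-block to exist precisely when only $3p$-blocks seem available, and without it (or some structural analysis of critical $3p$-blocks that you do not supply) your proof is incomplete.
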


\begin{proof} If $N^{4p}(I)>0$ then there exists a susequence $J$ of $I$ such that $|J|=4p$ and $\sum J \equiv 0$.
 Define $K=J-{x}$ where $x$ is any element of $J$.
 Using the technique used by Reiher [9] we get the equation, $1-N^{p}(K)+N^{2p}(K)-N^{3p}(K)\equiv 0$.
 So, one of the  $N^{p}(K),N^{2p}(K),N^{3p}(K)$ has to be non-zero.
 And we  get that:- either $J$ has a subsequence of length $p$ whose sum is $0$ mod $p$ or it has
 a subsequence of length $2p$ whose sum is $0$ mod $p$. Now let  $|J|=5p-3$.Then we have this equation  $4-3N^{p}(J)+2N^{2p}(J)-N^{3p}(J)\equiv 0$ by lifting the 
equation [9]   $1-N^{p}(I)+N^{2p}(I)-N^{3p}(I)\equiv 0$ where $|I|=4p-3$. We also have the equation $1-N^{p}(J)+N^{2p}(J)-N^{3p}(J)+N^{4p}(J)\equiv 0$.
Our claim is that either $J$ has a subsequence of length $p$ whose sum is $0$ mod $p$ or it has
a subsequence of length $2p$ whose sum is $0$ mod $p$. If not, then we get $N^{3p}(J) \equiv 4$ and therefore we obtain $N^{4p}(J)\equiv 3$ and hence either
$J$ has a subsequence of length $p$ whose sum is $0$ mod $p$ or it has
 a subsequence of length $2p$ whose sum is $0$ mod $p$.
$\\$Finally, if $|L|=6p-3$,then $6p-3>5p-3$ , so we have either  $L$ has a subsequence of length $p$ whose sum is $0$ mod $p$ or it has
 a subsequence of length $2p$ whose sum is $0$ mod $p$. Let $N^{2p}(L)=0$.
 So, we have  $T \subset L$ such that $|T|=p$ and $\sum T \equiv 0$. Now  $|L-T|=5p-3$ and by repeating the previous argument we will get another $p$ sequence whose
sum is $0$ mod $p$, call it $V$. Then $V \bigcup T$ is a $2p$ sequence whose sum is $0$.
\end{proof}

\subsection*{Existence of a $3p$-Zero Sum Sequence in $Z_p^3$}
\begin{lemma} 
If $J$ is a sequence of $Z_{p}^{3}$ such that $|J|=7p-3$, then  $J$ has a subsequence of 
length $3p$, whose sum is congruent to 0 modulo p .
\end{lemma}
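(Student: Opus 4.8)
The natural strategy is to bootstrap from the already-established $2p$-zero-sum result (the previous Lemma) exactly as the $2p$-case was bootstrapped from the $p$-case: use Reiher's polynomial method plus ``lifting'' to get a congruence relating the counts $N^{ip}(J)$, and then argue that the counts cannot all conspire to avoid a length-$3p$ zero-sum subsequence. First I would record the basic Chevalley–Warning-type identity: for a sequence $I$ of length $6p-3$ in $Z_p^3$, the polynomial method gives $1-N^{p}(I)+N^{2p}(I)-N^{3p}(I)+N^{4p}(I)-N^{5p}(I)+N^{6p}(I)\equiv 0$ (alternating sum over the admissible lengths, with the leading $1$ coming from the empty sequence), and for $|J|=7p-3$ a lifting of this yields a congruence of the form $\binom{7p-3}{6p-3}-\sum_{i=1}^{6}\lambda_i N^{ip}(J)+N^{7p}(J)\equiv 0$ with explicit binomial coefficients $\lambda_i$ reduced mod $p$ (by Lucas, $\binom{7p-3}{6p-3}\equiv 6$ and the $\lambda_i$ are small constants). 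I would also write down the unlifted identity for $|J|=7p-3$ directly, $1-N^{p}(J)+N^{2p}(J)-\cdots+N^{7p}(J)\equiv 0$.

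Next, suppose for contradiction that $J$ has no length-$3p$ zero-sum subsequence, i.e. $N^{3p}(J)=0$. By the previous Lemma every $(6p-3)$-subset of $J$ has a $2p$-zero-sum subsequence, and I would like to leverage a counting/averaging argument (summing the $|J|=6p-3$ identity over all such subsets, as in the Remark's lifting computation) to force one of the remaining $N^{ip}(J)$ to be nonzero in a way that produces a $3p$-zero-sum. The cleanest route: if $N^{p}(J)>0$, peel off a length-$p$ zero-sum block $T$; then $|J\setminus T|=6p-3$, so by the previous Lemma $J\setminus T$ contains a $2p$-zero-sum block $V$, and $T\cup V$ is a $3p$-zero-sum subsequence — contradiction. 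So we may assume $N^{p}(J)=0$. Similarly, if $N^{2p}(J)>0$ we extract a $2p$-zero-sum block and are left with length $5p-3<6p-3$... which is too short for the previous Lemma, so this branch needs the $2p$-existence statement in the stronger form ``either a $p$- or a $2p$-zero-sum'' for shorter sequences, which the earlier proof in fact supplies for $|J|\ge 5p-3$; combining, from a $2p$-block plus a further $2p$- or $p$-block one still assembles a $3p$- or $4p$-block, and a $4p$-zero-sum block minus one element gives (via $1-N^{p}+N^{2p}-N^{3p}\equiv0$ on length $4p$) a $p$- or $2p$-zero-sum inside it, again closing up to $3p$. Thus both $N^{p}(J)\equiv N^{2p}(J)\equiv 0$ and $N^{3p}(J)=0$, and plugging these into the two congruences above pins down $N^{4p}(J),N^{5p}(J),\dots$ to fixed nonzero residues; in particular $N^{4p}(J)\not\equiv 0$, so there is a $4p$-zero-sum block, and the peeling argument of the previous paragraph converts it into a $3p$-zero-sum block — the final contradiction.

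The main obstacle I anticipate is the bookkeeping in the middle step: making the case analysis on which $N^{ip}(J)$ is forced to be nonzero genuinely airtight, since the lifted congruence only controls an alternating combination of the counts mod $p$, and one must ensure that every surviving possibility yields an $ip$-zero-sum block that can be trimmed or completed to exactly $3p$ elements using the previous Lemma (which requires the trimmed-down sequence to have length $\ge 6p-3$, or the strengthened short version with length $\ge 5p-3$). Getting the binomial coefficients $\lambda_i$ correct mod $p$ via Lucas' theorem is routine but must be done carefully, because the whole contradiction hinges on $\binom{7p-3}{6p-3}\not\equiv 0$ and on the precise residues of the other coefficients; the hypothesis $p$ being an odd prime (and implicitly not too small) is what keeps these residues from collapsing. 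Once the arithmetic is fixed, the peeling-and-reassembling mechanism is exactly the one used in the $2p$-case, so the argument should go through.
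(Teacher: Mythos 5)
Your reduction to the case $N^{p}(J)=0$ (peel off a $p$-block, apply the $2p$-lemma to the remaining $6p-3$ elements) matches the spirit of the paper, and your observation that the earlier proof already supplies a $p$- or $2p$-zero-sum at length $5p-3$ is correct. The gap is in the mechanism you use to manufacture a $3p$-zero-sum afterwards. Once $N^{p}(J)=0$ and (by the contradiction hypothesis) $N^{3p}(J)=0$, a $4p$-zero-sum block $B$ can only split as $2p+2p$: deleting one element of $B$ and using $1-N^{p}+N^{2p}-N^{3p}\equiv 0$ yields a $p$-, $2p$- or $3p$-zero-sum inside $B$, the $p$- and $3p$-cases are ruled out by the standing hypotheses, and the complement of a $2p$-block in $B$ has length $2p$, not $3p$. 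So neither your claim that $N^{2p}(J)>0$ forces a $3p$-zero-sum (hence your ``$N^{2p}(J)\equiv 0$'') nor your final step ``$N^{4p}(J)\not\equiv 0$, then peel down to $3p$'' goes through. Indeed the intermediate conclusion is false: lifting the $4p-3$ identity up to $7p-3$ gives $5+N^{2p}(J)\equiv 0$, so with $N^{p}(J)=N^{3p}(J)=0$ one has $N^{2p}(J)\equiv -5\not\equiv 0$ for $p>5$, the opposite of what you assert. Moreover, the only two congruences you actually wrote down (the direct identity at $7p-3$ and the single lifting from $6p-3$) involve the three unknowns $N^{4p}(J)$, $N^{5p}(J)$, $N^{6p}(J)$ and cannot pin any of them to a nonzero residue.

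The paper closes this by aiming at $5p$ instead of $4p$. It lifts the identities at lengths $4p-3$, $5p-3$ and $6p-3$ up to $7p-3$, obtaining (with $N^{p}(J)=N^{3p}(J)=0$) the three relations $5+N^{2p}(J)\equiv 0$, $15+6N^{2p}(J)+N^{4p}(J)\equiv 0$ and $6+4N^{2p}(J)+2N^{4p}(J)-N^{5p}(J)\equiv 0$, whence $N^{5p}(J)\equiv 16\not\equiv 0$. For a $5p$-zero-sum block $T$ it first shows (deleting one element, using $1-N^{p}+N^{2p}-N^{3p}+N^{4p}\equiv 0$ and taking complements inside $T$) that $T$ contains a $p$- or $2p$-zero-sum; since $N^{p}(J)=0$ it contains a $2p$-zero-sum $M$, and then $T\setminus M$ is a zero-sum subsequence of length exactly $5p-2p=3p$. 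This arithmetic, $5p-2p=3p$, is precisely what your $2p+2p$ and $4p-2p$ bookkeeping cannot deliver; to repair your argument you need the third lifting and the passage through a $5p$-block.
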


\begin{proof}  Kubertin [8] proved that this bound can be reduced to $6p-3$. But, by only using 
the polynomial method [9] we get an upper bound equals to $7p-3$.

Firstly, we show that if $|I|=5p$ and $\sum I \equiv 0$ then either $I$ has a subsequence of length $p$ whose sum is $0$ mod $p$ or it has
 a subsequence of length $2p$ whose sum is $0$ mod $p$.  Let,  $H=I-{x}$ where $x$ is an element of $I$,then we have the equation:  $1-N^{p}(H)+N^{2p}(H)-N^{3p}(H)+N^{4p}(H)\equiv 0$.
 So, if  $N^{p}(I)=0,N^{2p}(I)=0$ then either  $N^{3p}(H)>0$ or  $N^{4p}(H)>0$ and which contradicts the assumption. Now, we show that if $|J|=6p-3$ then either $J$ has a subsequence of length $p$ whose sum is $0$ mod $p$ or it has
 a subsequence of length $2p$ whose sum is $0$ mod $p$. If not, then we have these two equations,

 $1-N^{3p}(J)+N^{4p}(J)-N^{5p}(J)\equiv 0.$

and  $5-2N^{3p}(J)+N^{4p}(J)\equiv 0$ by lifting the equation : $1-N^{p}(L)+N^{2p}(L)-N^{3p}(L)+N^{4p}(L)\equiv 0$
 where  $|L|=5p-3$. And we get the equation $-4+N^{3p}(J)-N^{5p}(J) \equiv 0$. So, either $N^{3p}(J)>0$ or $N^{5p}(J)>0$ and hence the result follows.
  Now, if $|J|=7p-3$ and $J$ has either a subsequence of length $p$ whose sum is $0$ mod $p$ or it has
 a subsequence of length $3p$ whose sum is $0$ mod $p$ then it is okay. Otherwise, we will have these equations :
 \begin{enumerate}
 \item $6+4N^{2p}(J)+2N^{4p}(J)-N^{5p}(J)\equiv 0$ by lifting the equation : $1-N^{p}(H)+N^{2p}(H)-N^{3p}(H)+N^{4p}(H)-N^{5p}(H)\equiv 0$ where $|H|=6p-3$.

 \item  $15+6N^{2p}(J)+N^{4p}(J)\equiv 0$  by lifting  : $1-N^{p}(H)+N^{2p}(H)-N^{3p}(H)+N^{4p}(H)\equiv 0$ where $|H|=5p-3$.
 
 \item  $5+N^{2p}(J)\equiv 0$  by lifting : $1-N^{p}(H)+N^{2p}(H)-N^{3p}(H)\equiv 0$  where $|H|=4p-3$.\end{enumerate}

  And all these gives us the new equation  $N^{5p}(J)\equiv 16$ .  And  we have seen that $|T|=5p$, $\sum T\equiv 0$  implies 
 either $T$ has a subsequence of length $p$ whose sum is $0$ mod $p$ or it has
 a subsequence of length $2p$ whose sum is $0$ mod $p$. But as we have assumed that $N^{p}(T)=0$, therefore there exists $M \subset T$
 such that $|M|=2p$ and $\sum M \equiv 0$ . And $T-M$ satisfies our condition. 
 \end{proof}

\begin{corollary} It is also clear that if $I$ is a sequence of $Z_{p}^{3}$ with $|I|=8p-3$ where $p$ is a prime number,
then $I$ has a subsequence of length $4p$ whose sum is $0$ mod p and if $|I|=9p-3$ then $I$ has subsequence
of length $5p$ whose sum is $0$ mod p.
 \end{corollary}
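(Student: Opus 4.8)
The plan is to deduce both assertions from the two Statements just proved — the one producing a $2p$-zero-sum subsequence out of any sequence of length $6p-3$, and the one producing a $3p$-zero-sum subsequence out of any sequence of length $7p-3$ — simply by peeling off one zero-sum block of controlled length and then re-applying the appropriate Statement to what remains. No new polynomial-method identity or lifting is required; this is why the author writes ``it is also clear.'' Throughout, ``subsequence'' is understood in the index sense, so that a subsequence of $I-A$ and $A$ itself use disjoint index sets and their union is again a subsequence of $I$.

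For $|I|=8p-3$: since $8p-3\geqslant 6p-3$, the first Statement gives a subsequence $A\subseteq I$ with $|A|=2p$ and $\sum A\equiv 0$. Deleting $A$ leaves a sequence $I-A$ of length $8p-3-2p=6p-3$, so the first Statement applies once more and yields $B\subseteq I-A$ with $|B|=2p$ and $\sum B\equiv 0$. Then $A\cup B$ is a subsequence of $I$ with $|A\cup B|=4p$ and $\sum(A\cup B)\equiv 0$, as desired.

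For $|I|=9p-3$: since $9p-3\geqslant 7p-3$, the second Statement gives $A\subseteq I$ with $|A|=3p$ and $\sum A\equiv 0$; now $I-A$ has length $9p-3-3p=6p-3$, so the first Statement furnishes $B\subseteq I-A$ with $|B|=2p$ and $\sum B\equiv 0$, whence $A\cup B$ has length $5p$ and zero sum. (One could equally well first peel off a $2p$-block, landing at length $7p-3$, and then invoke the $3p$-Statement.) The argument is purely mechanical, and the only point that needs a moment's attention is the length bookkeeping: one must check that after removing the first zero-sum block the residual sequence has \emph{exactly} the length needed to re-invoke one of the two Statements — which is precisely what forces the values $8p-3$ and $9p-3$. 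There is no genuine obstacle beyond this arithmetic.
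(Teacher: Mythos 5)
Your proof is correct and is essentially the argument the paper intends: the corollary is stated without proof precisely because it follows by peeling off a $2p$- (resp.\ $3p$-) zero-sum block via the two preceding Statements, leaving exactly $6p-3$ elements to which the $2p$-Statement applies again, and taking the union of the two blocks. The length bookkeeping you give ($8p-3-2p=6p-3$ and $9p-3-3p=6p-3$, or equivalently $9p-3-2p=7p-3$) is exactly right, so nothing is missing.
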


 \section*{Proofs }
 \subsection*{Results On $Z_p^3$ Sequences leading to the proofs}
  
  We have used this result in several ocassions without mentioning it, $\binom{kp-c}{rp-c}\equiv \frac{(k-1)...r}{(k-r)!}$
  and we are assuming that $p>7$ for the rest of the text.
 
 $\\$
 Let $J$ be a sequence of $Z_{p}^3$ with $|J|=9p-3$. Then we have the following equations by using the technique used in [9] :
 
 \begin{enumerate}
 
\item  $1-N^{p}(J)+N^{2p}(J)-N^{3p}(J)+N^{4p}(J)-N^{5p}(J)+N^{6p}(J)-N^{7p}(J)+N^{8p}(J) \equiv 0.$

\item $1-N^{p}(J_{1})+N^{2p}(J_{1})-N^{3p}(J_{1})+N^{4p}(J_{1})-N^{5p}(J_{1})+N^{6p}(J_{1})-N^{7p}(J_{1}) \equiv 0$ where $j_{1}$ is a subsequence of $J$ with $|J_{1}|=8p-3.$

\item $1-N^{p}(J_{2})+N^{2p}(J_{2})-N^{3p}(J_{2})+N^{4p}(J_{2})-N^{5p}(J_{2})+N^{6p}(J_{2}) \equiv 0$ where $J_{2}$ is a subsequence of $J$ wit $|J_{2}|=7p-3.$

 \item $1-N^{p}(J_{3})+N^{2p}(J_{3})-N^{3p}(J_{3})+N^{4p}(J_{3})-N^{5p}(J_{3}) \equiv 0$ where $J_{3}$ is a subsequence of $J$ with $|J_{3}|=6p-3.$

 \item $1-N^{p}(J_{4})+N^{2p}(J_{4})-N^{3p}(J_{4})+N^{4p}(J_{4}) \equiv 0$ where $J_{4}$ is a subsequence of $J$ with $|J_{4}|=5p-3.$

 \item $1-N^{p}(J_{5})+N^{2p}(J_{5})-N^{3p}(J_{5}) \equiv 0$ where $J_{5}$ is a subsequence of $J$ with $|J_{5}|=4p-3.$
 
\end{enumerate}

 By lifting the above equations we get :
 
 \begin{enumerate}
 
 \item $56-21N^{p}(J)+6N^{2p}(J)-N^{3p}(J) \equiv 0$ after lifting the above equation $(6)$.

\item $70-35N^{p}(J)+15N^{2p}(J)-5N^{3p}(J)+N^{4p}(J) \equiv 0$ after lifting the above equation $(5)$.

\item $56-35N^{p}(J)+20N^{2p}(J)-10N^{3p}(J)+4N^{4p}(J)-N^{5p}(J) \equiv 0$ after lifting the above equation $(4)$.

\item $28-21N^{p}(J)+15N^{2p}(J)-10N^{3p}(J)+6N^{4p}(J)-3N^{5p}(J)+N^{6p}(J) \equiv 0$ after lifting the above equation $(3)$.

\item $8-7N^{p}(J)+6N^{2p}(J)-5N^{3p}(J)+4N^{4p}(J)-3N^{5p}(J)+2N^{6p}(J)-N^{7p}(J) \equiv 0$ after lifting the above equation $(2)$.

\item $1-N^{p}(J)+N^{2p}(J)-N^{3p}(J)+N^{4p}(J)-N^{5p}(J)+N^{6p}(J)-N^{7p}(J)+N^{8p}(J) \equiv 0$. 
 
\end{enumerate}
 
  Now, assume that, $N^{2p}(J)\equiv k$  and $N^{p}(J)=0$.  Then, we can rewrite the above equations in the form of a matrix

 $\\$

$   \begin{pmatrix}
    1 & 0 & 0 & 0 & 0 & 0 & 0 \\
    6 & -1 & 0 & 0 & 0 & 0 & 0 \\
    15 & -5 & 1 & 0 & 0 & 0 & 0 \\
    20 & -10 & 4 & -1 & 0 & 0 & 0 \\
    15 & -10 & 6 & -3 & 1 & 0 & 0 \\
    6 & -5 & 4 & -3 & 2 & -1 & 0 \\
    1 & -1 & 1 & -1 & 1 & -1 & 1 \\
   
   \end{pmatrix} $ $\times$ $\begin{pmatrix}
   N^{2p}(J) \\
   N^{3p}(J) \\
   N^{4p}(J) \\
   N^{5p}(J) \\
   N^{6p}(J) \\
   N^{7p}(J) \\
   N^{8p}(J) \\
   
   \end{pmatrix} $ = $ \begin{pmatrix}
   k \\
   -56 \\
   -70 \\
   -56 \\
   -28 \\
   -8 \\
   -1 \\
   
   \end{pmatrix} $

   $\\$ 
   
   And we get,

   $\begin{pmatrix}
   N^{2p}(J) \\
   N^{3p}(J) \\
   N^{4p}(J) \\
   N^{5p}(J) \\
   N^{6p}(J) \\
   N^{7p}(J) \\
   N^{8p}(J) \\
   
   \end{pmatrix} $ = $   \begin{pmatrix}
    1 & 0 & 0 & 0 & 0 & 0 & 0 \\
    6 & -1 & 0 & 0 & 0 & 0 & 0 \\
    15 & -5 & 1 & 0 & 0 & 0 & 0 \\
    20 & -10 & 4 & -1 & 0 & 0 & 0 \\
    15 & -10 & 6 & -3 & 1 & 0 & 0 \\
    6 & -5 & 4 & -3 & 2 & -1 & 0 \\
    1 & -1 & 1 & -1 & 1 & -1 & 1 \\
   
   \end{pmatrix} $ $\times$ $ \begin{pmatrix}
   k \\
   -56 \\
   -70 \\
   -56 \\
   -28 \\
   -8 \\
   -1 \\
   
   \end{pmatrix} $ = $ \begin{pmatrix}
   k \\
   6k+56 \\
   15k+210 \\
   20k+336 \\
   15k+252 \\
   6k+120 \\
   k+21\\
   
   \end{pmatrix} $
   
   $\\$
   
   Among the numbers $ k,
   6k+56, 
   15k+210, 
   6k+120 , 
   k+21 , 20k+336, 15k+252$,  $p$ cannot divide two of these numbers simultaneously, except $20k+336, 15k+252$  as $p>7$. 
   (i.e. if $p$ divides $6k+56$ and $6k+120$, then $p$ divides $64$ but $p$ is a prime number $>7$.
   So, it is not possible. The similar argument proves the claim.)

   \begin{corollary}

   If $J$ is a sequence in $Z_{p}^{3}$ with $|J|=8p-3$, $N^{2p}(J)\equiv l$ mod p and $N^{p}(J)=0$, then we get 
   
   $\\$

   $\begin{pmatrix}
   N^{2p}(J) \\
   N^{3p}(J) \\
   N^{4p}(J) \\
   N^{5p}(J) \\
   N^{6p}(J) \\
   N^{7p}(J) \\

   \end{pmatrix} $ = $ \begin{pmatrix}
   l \\
   5l+35 \\
   10l+105 \\
   10l+122 \\
   5l+52 \\
   l+1 \\
      \end{pmatrix} $ and if $p$ divides one of the numbers on the right hand side then it cannot
      divide the others if $p \neq 13, 17, 19, 47, 61$.
      \end{corollary}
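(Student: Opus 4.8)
The plan is to repeat, now for $|J|=8p-3$, exactly the chain of polynomial identities, liftings and matrix inversion carried out above for $|J|=9p-3$. First I would record the identity supplied by the technique of Reiher [9] for each subsequence $I\subset J$ of length $kp-3$ with $k\in\{4,5,6,7\}$ and for $J$ itself ($k=8$): for a length $kp-3$ sequence in $Z_p^3$ it reads $1-N^{p}(I)+N^{2p}(I)-\cdots+(-1)^{k-1}N^{(k-1)p}(I)\equiv 0$. This yields four identities to be lifted, together with the identity for $J$, which already lives at the top level.

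Next I would lift the four identities with $k\le 7$ up to $J$. Summing the length $kp-3$ identity over all subsequences $I\subset J$ of that length, a fixed zero-sum subsequence of length $rp$ (with $r\ge1$) is counted in $\binom{(8-r)p-3}{(k-r)p-3}$ of the summands, while the ``$1$'' is counted $\binom{8p-3}{kp-3}$ times; reducing every such binomial with the congruence $\binom{kp-c}{rp-c}\equiv\frac{(k-1)\cdots r}{(k-r)!}$ recalled above converts the four identities into four linear congruences in $N^{2p}(J),\dots,N^{7p}(J)$. Imposing $N^{p}(J)=0$, these four congruences together with the identity for $J$ and the relation $N^{2p}(J)\equiv l$ make up a $6\times6$ linear system for $(N^{2p}(J),\dots,N^{7p}(J))$. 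As in the $9p-3$ computation its coefficient matrix is lower triangular with first column $\bigl(\binom 50,\binom 51,\dots,\binom 55\bigr)=(1,5,10,10,5,1)$ and is its own inverse modulo $p$ (a sign-alternating Pascal matrix is an involution), so multiplying the right-hand vector by it at once gives the closed forms for $N^{3p}(J),\dots,N^{7p}(J)$ displayed in the statement.

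For the divisibility assertion I would run through all $\binom 62=15$ pairs among the six numbers $l,\,5l+35,\,10l+105,\,10l+122,\,5l+52,\,l+1$. In each pair a suitable integer combination eliminates $l$ and leaves a fixed integer that $p$ must divide whenever $p$ divides both members. For most pairs this integer has all of its prime divisors $\le 7$, or equals $\pm1$ (so the two expressions are incompatible mod $p$), and the standing hypothesis $p>7$ disposes of it; the surviving pairs force $p$ into $\{13,17,19,47,61\}$ --- for instance $l$ against $10l+122$ gives $p\mid 122=2\cdot 61$, $5l+35$ against $5l+52$ gives $p\mid 17$, $10l+105$ against $l+1$ gives $p\mid 95=5\cdot 19$, $5l+52$ against $l+1$ gives $p\mid 47$, and $l$ against $5l+52$ gives $p\mid 52=2^2\cdot 13$. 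Hence for $p\notin\{13,17,19,47,61\}$ no two of the six numbers can be divisible by $p$ simultaneously.

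The hard part will be the bookkeeping in the lifting step: each binomial multiplicity and its reduction mod $p$ has to be tracked correctly, and one must verify that the $6\times6$ matrix really is involutory so that the explicit vector comes out as claimed. Once that is in hand the pairwise divisibility check is routine, but it must be carried through all fifteen pairs so as to be certain that the exceptional set is exactly $\{13,17,19,47,61\}$ and that $p>7$ eliminates every remaining case.
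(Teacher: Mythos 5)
Your plan is exactly the route the paper intends: the corollary is meant to follow from the same chain of Reiher-type identities, liftings via $\binom{kp-c}{rp-c}\equiv\binom{k-1}{r-1}$, and inversion of the sign-alternating Pascal-type matrix that the paper displays for the $9p-3$ case, and your set-up (which identities to lift, the multiplicities $\binom{(8-r)p-3}{(k-r)p-3}$, the first column $(1,5,10,10,5,1)$, the involution property) is all correct in outline. The problem is that the one step you explicitly defer --- ``the bookkeeping in the lifting step'' --- is exactly where the argument fails: if you actually carry out the computation you describe, it does \emph{not} produce the vector displayed in the statement. Lifting the $6p-3$ identity to $8p-3$ gives $21+10N^{2p}(J)-6N^{3p}(J)+3N^{4p}(J)-N^{5p}(J)\equiv 0$, since the constant is $\binom{8p-3}{6p-3}\equiv\binom{7}{5}=21$; with $N^{3p}(J)\equiv 5l+35$ and $N^{4p}(J)\equiv 10l+105$ this forces $N^{5p}(J)\equiv 10l+126$, not $10l+122$, and continuing through the $7p-3$ lifting and the top identity yields $N^{6p}(J)\equiv 5l+70$ and $N^{7p}(J)\equiv l+15$, not $5l+52$ and $l+1$. (The same kind of slip is present in the paper's own $9p-3$ table, where the method gives $15k+280$ rather than the printed $15k+252$, so the discrepancy lies in the printed numbers, not in the method.) Hence your assertion that multiplying by the involutory matrix ``at once gives the closed forms displayed in the statement'' is unverified and, for the last three entries, false.

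This propagates into your second step: the fifteen pairwise eliminations you run are performed on the printed (incorrect) entries, which is why you recover the exceptional set $\{13,17,19,47,61\}$ (e.g.\ $122=2\cdot61$, $52=4\cdot13$, $5(l+1)-(5l+52)=-47$). With the entries the computation actually produces, $(l,\,5l+35,\,10l+105,\,10l+126,\,5l+70,\,l+15)$, every pairwise eliminant ($35$, $105$, $126$, $70$, $15$, $56$, $40$, $21$, $45$, $14$, $24$, $5$, \dots) has all prime factors at most $7$, so for $p>7$ no two entries can be simultaneously divisible by $p$ and no exceptional primes are needed. A sound write-up must either carry the lifting arithmetic through and state the corrected vector together with the correspondingly corrected divisibility conclusion, or else identify a genuinely different derivation of the printed numbers; as it stands, the proposal proves neither the statement as displayed nor detects that it cannot be obtained by the method it invokes.
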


   \begin{corollary}
      
       If $J$ is a sequence in $Z_{p}^{3}$ with $|J|=7p-3$, $N^{2p}(J)\equiv m$ mod p and $N^{p}(J)=0$, then we get 
      
      $\\$
   
   $\begin{pmatrix}
   N^{2p}(J) \\
   N^{3p}(J) \\
   N^{4p}(J) \\
   N^{5p}(J) \\
   N^{6p}(J) \\

   \end{pmatrix} $ = $ \begin{pmatrix}
   m \\
   4m+20 \\
   6m+45 \\
   4m+36 \\
   m+10 \\
   
      \end{pmatrix} $ 
and if $p$ divides one of the numbers on the right hand side then it cannot
      divide the others.

      \end{corollary}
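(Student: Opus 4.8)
The plan is to run, one dimension lower in the exponents, exactly the argument used above for $|J|=9p-3$ and in Corollary 2 for $|J|=8p-3$. The input is the polynomial-method identity for a sequence $J$ with $|J|=7p-3$,
\[
1-N^{p}(J)+N^{2p}(J)-N^{3p}(J)+N^{4p}(J)-N^{5p}(J)+N^{6p}(J)\equiv 0,
\]
together with the three identities obtained by lifting, up to length $7p-3$, the equations attached to subsequences of lengths $4p-3$, $5p-3$ and $6p-3$. Using $\binom{kp-c}{rp-c}\equiv\binom{k-1}{r-1}$ (recorded just before Corollary 2), the coefficient of $N^{jp}(J)$ in the lift of the length-$(rp-3)$ equation to length $7p-3$ is $(-1)^{j}\binom{6-j}{r-1-j}$, so the three lifted identities are
\[
20-10N^{p}(J)+4N^{2p}(J)-N^{3p}(J)\equiv 0,
\]
\[
15-10N^{p}(J)+6N^{2p}(J)-3N^{3p}(J)+N^{4p}(J)\equiv 0,
\]
\[
6-5N^{p}(J)+4N^{2p}(J)-3N^{3p}(J)+2N^{4p}(J)-N^{5p}(J)\equiv 0.
\]

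Next I would set $N^{p}(J)=0$ and $N^{2p}(J)\equiv m$, and collect these three relations together with the trivial one $N^{2p}(J)\equiv m$ into the linear system
\[
\begin{pmatrix}
1 & 0 & 0 & 0 & 0\\
4 & -1 & 0 & 0 & 0\\
6 & -3 & 1 & 0 & 0\\
4 & -3 & 2 & -1 & 0\\
1 & -1 & 1 & -1 & 1
\end{pmatrix}
\begin{pmatrix}
N^{2p}(J)\\ N^{3p}(J)\\ N^{4p}(J)\\ N^{5p}(J)\\ N^{6p}(J)
\end{pmatrix}
\equiv
\begin{pmatrix}
m\\ -20\\ -15\\ -6\\ -1
\end{pmatrix}.
\]
The coefficient matrix is lower triangular with diagonal entries $\pm1$, so forward substitution---pure arithmetic mod $p$---yields the claimed column $(m,\,4m+20,\,6m+45,\,4m+36,\,m+10)^{\mathsf T}$; as in the $|J|=9p-3$ case one may also observe that this matrix equals its own inverse, which makes the solution literally the product of the matrix with the right-hand column.

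For the coprimality statement I would run through the $\binom{5}{2}=10$ pairs among $m,\,4m+20,\,6m+45,\,4m+36,\,m+10$ and, in each case, eliminate $m$ by a small integer combination: e.g.\ $3(4m+20)-2(6m+45)=-30$, $3(4m+36)-2(6m+45)=18$, $(4m+20)-4(m+10)=-20$, $(4m+36)-4(m+10)=-4$, while the remaining pairs leave $20$, $45$, $36$, $10$, $16$, $15$. Every constant that appears has all of its prime factors in $\{2,3,5\}$, so no prime $p>7$ can divide two of the five entries at once; this is precisely why, in contrast to Corollary 2, no exceptional primes occur here.

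The only point that requires genuine care is the bookkeeping in the lifting step: one must pin down the binomial coefficients and signs so that the displayed $5\times5$ matrix and right-hand side are exactly as above. Once that is done, the inversion is a four-line recursion and the coprimality assertion reduces to the ten two-line computations indicated, so I do not anticipate a real obstacle.
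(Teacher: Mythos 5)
Your proposal is correct and follows essentially the same route the paper uses (the corollary is stated there as an instance of the same lift-and-invert matrix computation carried out for the $9p-3$ case): your lifted identities, the lower-triangular self-inverse matrix, the solution $(m,\,4m+20,\,6m+45,\,4m+36,\,m+10)$, and the pairwise eliminations leaving only $\{2,3,5\}$-smooth constants all check out for $p>7$.
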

\begin{corollary}

      If $J$ is a sequence in $Z_{p}^{3}$ with $|J|=6p-3$, $N^{2p}(J)\equiv t$ mod p and $N^{p}(J)=0$, then we get 
   $\\$
   
   $\begin{pmatrix}
   N^{2p}(J) \\
   N^{3p}(J) \\
   N^{4p}(J) \\
   N^{5p}(J) \\

   \end{pmatrix} $ = $ \begin{pmatrix}
   t \\
   3t+10 \\
   3t+15 \\
   t+6 \\
    
      \end{pmatrix} $ and if $p$ divides one of the numbers on the right hand side then it cannot
      divide the others.
      
       \end{corollary}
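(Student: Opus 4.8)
The plan is to proceed exactly as in the displayed computations for $|J|=9p-3$, $8p-3$ and $7p-3$ above, only with a shorter tower of lifted equations. First I would record the basic polynomial–method identity for a length-$(6p-3)$ sequence, namely
\[
1 - N^{p}(J) + N^{2p}(J) - N^{3p}(J) + N^{4p}(J) - N^{5p}(J) \equiv 0,
\]
which is equation $(4)$ of the list above (the case $|J_{3}|=6p-3$). To this I would adjoin two lifted equations: one obtained by summing the length-$(5p-3)$ identity $1 - N^{p} + N^{2p} - N^{3p} + N^{4p} \equiv 0$ over all subsequences of $J$ of length $5p-3$, and one obtained by summing the length-$(4p-3)$ identity $1 - N^{p} + N^{2p} - N^{3p} \equiv 0$ over all subsequences of $J$ of length $4p-3$. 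In each case a fixed zero-sum subsequence of length $jp$ is counted with a multiplicity that is a binomial coefficient of the shape $\binom{kp-3}{rp-3}$, which by the congruence $\binom{kp-c}{rp-c} \equiv \frac{(k-1)\cdots r}{(k-r)!}$ collapses to a small integer. Carrying out this bookkeeping yields
\[
5 - 4N^{p}(J) + 3N^{2p}(J) - 2N^{3p}(J) + N^{4p}(J) \equiv 0
\quad\text{and}\quad
10 - 6N^{p}(J) + 3N^{2p}(J) - N^{3p}(J) \equiv 0 .
\]

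Next I would impose the two hypotheses $N^{p}(J)=0$ and $N^{2p}(J)\equiv t$. The three congruences above then form a linear system for $N^{3p}(J), N^{4p}(J), N^{5p}(J)$ that becomes triangular once $N^{2p}(J)=t$ is moved to the right-hand side, so it can be solved by back-substitution (equivalently, written as a lower-triangular matrix times the vector of the $N^{jp}(J)$'s, just as in the $9p-3$ display). The length-$(4p-3)$ lift gives $N^{3p}(J)\equiv 3t+10$; feeding this into the length-$(5p-3)$ lift gives $N^{4p}(J)\equiv 3t+15$; and feeding both into the basic length-$(6p-3)$ identity gives $N^{5p}(J)\equiv t+6$. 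This is the claimed vector identity.

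For the final assertion I would argue by contradiction. If $p$ divided two of the four numbers $t,\ 3t+10,\ 3t+15,\ t+6$, then $p$ would divide the integer obtained from that pair by eliminating $t$; running over all six pairs, that integer is one of $3,\,5,\,6,\,8,\,10,\,15$ (for instance $(3t+15)-(3t+10)=5$, $3(t+6)-(3t+10)=8$, $(t+6)-t=6$, and so on). Since $p$ is a prime with $p>7$ it divides none of these, a contradiction; hence at most one of the four right-hand-side entries is divisible by $p$.

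I do not anticipate a genuine obstacle here. The only points requiring care are the multiplicity count for the two lifts --- obtaining the coefficient strings $(5,4,3,2,1)$ and $(10,6,3,1)$ correctly from the binomial congruence --- and checking that the resulting $3\times 3$ system is nonsingular modulo $p$, which is automatic from its triangular shape. The divisibility step is immediate once $p>7$ is invoked, and, in contrast with the $8p-3$ case treated above where the exceptional primes $13,17,19,47,61$ intervened, no exceptional primes appear here because every relevant constant is at most $15$.
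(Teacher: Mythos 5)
Your proposal is correct and follows essentially the same route as the paper: the corollary is just the length-$(6p-3)$ instance of the lifting-and-triangular-system computation carried out in full for $|J|=9p-3$, and your coefficient strings $(5,4,3,2,1)$ and $(10,6,3,1)$, the back-substituted values $3t+10$, $3t+15$, $t+6$, and the pairwise eliminants $3,5,6,8,10,15$ (all coprime to any prime $p>7$) all check out. Nothing further is needed.
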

      
\begin{corollary}

     If $J$ is a sequence in $Z_{p}^{3}$ with $|J|=5p-3$, $N^{2p}(J)\equiv r$ mod p and $N^{p}(J)=0$ then we get 
   $\\$
   
   $\begin{pmatrix}
   N^{2p}(J) \\
   N^{3p}(J) \\
   N^{4p}(J) \\

   \end{pmatrix} $ = $ \begin{pmatrix}
   r \\
   2r+4 \\
   r+3 \\

      \end{pmatrix} $   
and if $p$ divides one of the numbers on the right hand side then it cannot
      divide the others.
      \end{corollary}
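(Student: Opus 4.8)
The plan is to run the same two-step argument that produced Corollaries~1--4, specialised to the shortest admissible length $|J|=5p-3$. First I would record the basic polynomial-method identity for a sequence of length $5p-3$, which is equation~$(5)$ of the first displayed list with $J_{4}=J$, namely
\[
1-N^{p}(J)+N^{2p}(J)-N^{3p}(J)+N^{4p}(J)\equiv 0 .
\]
Secondly I would obtain a second relation by lifting, in the sense of Reiher [9], the length-$(4p-3)$ identity $1-N^{p}(I)+N^{2p}(I)-N^{3p}(I)\equiv 0$ up to $J$: summing it over all $I\subset J$ with $|I|=4p-3$ and counting, for each fixed zero-sum subsequence of length $jp$, the $\binom{(5-j)p-3}{(4-j)p-3}$ choices of $I$ containing it, one obtains
\[
\binom{5p-3}{4p-3}-\binom{4p-3}{3p-3}N^{p}(J)+\binom{3p-3}{2p-3}N^{2p}(J)-\binom{2p-3}{p-3}N^{3p}(J)\equiv 0 .
\]
The reduction $\binom{kp-c}{rp-c}\equiv\frac{(k-1)\cdots r}{(k-r)!}$ quoted above turns the four coefficients into $4,3,2,1$, so this lifted identity reads $4-3N^{p}(J)+2N^{2p}(J)-N^{3p}(J)\equiv 0$.

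Next I would impose the hypotheses $N^{p}(J)=0$ and $N^{2p}(J)\equiv r$ and solve the resulting triangular linear system in the two remaining unknowns: the lifted identity gives $N^{3p}(J)\equiv 2r+4$, and substituting this into the basic identity gives $N^{4p}(J)\equiv N^{3p}(J)-N^{2p}(J)-1\equiv r+3$. This is precisely the asserted column vector $(r,\,2r+4,\,r+3)^{\mathsf T}$; one can also read it off as the product of the $3\times 3$ lower-triangular Pascal-type matrix (the $k=5$ truncation of the matrix used in the $9p-3$ case) with $(r,\,-4,\,-1)^{\mathsf T}$.

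For the divisibility statement I would form the $\mathbb{Z}$-linear combinations of the three entries in which $r$ cancels: $(2r+4)-2r=4$, $(r+3)-r=3$, and $2(r+3)-(2r+4)=2$. Since $p>7$, the prime $p$ divides none of $2,3,4$, so if $p$ divided two of $r$, $2r+4$, $r+3$ it would divide one of $2,3,4$ --- impossible. Hence $p$ divides at most one of them.

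I do not expect a real obstacle here: this is simply the $k=5$ base case of the linear-algebra pattern already executed for $k=6,7,8,9$, so the only thing that needs care is the bookkeeping in the lifting step, i.e.\ evaluating each binomial coefficient correctly through the reduction formula.
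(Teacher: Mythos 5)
Your proposal is correct and follows essentially the paper's own route: the length-$(5p-3)$ polynomial identity together with the lifted $(4p-3)$ identity $4-3N^{p}(J)+2N^{2p}(J)-N^{3p}(J)\equiv 0$ (the same equation the paper already uses in the $2p$-zero-sum section), solved under $N^{p}(J)=0$, $N^{2p}(J)\equiv r$, exactly as in the $9p-3$ matrix computation truncated to this case. Your elimination argument for the divisibility claim (reducing to $p\mid 2,3,4$, impossible for $p>7$) is the intended justification as well.
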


      \begin{remark}
      
      If $|I|=9p-3$, $N^{p}(J)=0$ and $N^{2p}(J)\equiv c$, $c$ is a fixed  number $\forall J$ such that $|J|=4p-3$,$J \subset I$ then
 we have the following relations $5r\equiv3t, 5r\equiv2m, 7r\equiv2l, 3k\equiv14t$ and $3t\equiv 10c,2m\equiv 10c,r\equiv 2c$. Here,
 $t, r, m, l, k$ are the variables defined earlier.
      \end{remark}

   \begin{application}
   
   $\\$  If $J$ is a sequence in $Z_{p}^{3}$ with $|J|=9p-3$ and $N^{p}(J)=0$ , then $J$ has a 
   subsequence of length $6p$ whose sum is $0$ mod $p$ where $p$ is a prime number $>7$ and $p \neq 13, 17, 19, 47, 61$ .
   \end{application}

\begin{proof}

    If $N^{6p}(J)=0$, then by using the earlier calculations we get $N^{8p}(J)\neq 0$. So, there exists $I \subset J$
   such that $|I|=8p$ and $\sum I \equiv 0$. Let M = $I-\{x, y, z\}$. Here, $|M|=8p-3$. So, using the 
   corollary1 we get that $N^{7p}(M)\neq 0$. So, there exist $L\subset M$ such that $|L|=7p$ and 
   $\sum L \equiv 0$. Now, $|I-L|=p$ and $\sum (I-L) \equiv 0$. But, it is a contradiction to our hypothesis.
    So, $N^{6p}(J) \neq 0$.
\end{proof}   
   
 \subsection*{Proof Of The Main Results}

\begin{proof}[\bf Proof of Theorem1] Let there be a subsequence $I \subset J$, $|I|=6p$ and $\Sigma I\equiv 0$.
 Now, if there exists a subsequence $K\subset I$ such that $|K|=5p$ and $\Sigma K\equiv 0$ then we are done.
 Otherwise, $t\equiv -6 \Rightarrow  3t\equiv -18\equiv 10c \Rightarrow 5c\equiv -9$, where $t = N^{2p}(I^{`})$,$|I^{`}| = 6p-3$
and $I^{`}\subset I$. Here,$N^{4p}(I^{`})\equiv -3$. Let $I^{``}\subset I^{`}, |I^{``}|=4p, \Sigma I^{``}\equiv 0$.
 Let $L\subset I^{``}, |L|=4p-3$. We get the equation, 
$1+N^{2p}(L)-N^{3p}(L)\equiv 0 \Rightarrow 5+5c-5N^{3p}(L)\equiv 0 \Rightarrow N^{3p}(L) \neq 0$ as $5c\equiv -9$. So, we have got a $p$ zero-sequence.
If we do not have a $6p$ zero sequence inside $J$, then $m\equiv -10$ ie $N^{5p}(J)\neq 0$ and $10c\equiv -20$,
 $r\equiv -4$, so we are done.
\end{proof}
$\\$
\begin{proof}[\bf Proof of Theorem2] Consider the set $I=\{(0,0,0)^{p-1}, (1,0,0)^{p-1}, (0,1,0)^{p-1}, 
$\\$(0,0,1)^{p-1}, (1,1,1)^{\frac{p-1}{2}}\}$. Then, $|I|=4p-4+\frac{p-1}{2}$. $I$  neither has a $p$-zero-sum sequence of elements nor has a $2p$-zero-sum sequence of elements. So, $I$ satisfies the condition that
 $N^{2p}(I)\equiv 0$ but it does not have a $p$-zero-sum sequence.
\end{proof}
 \begin{remark}So, $4p-4+\frac{p-1}{2}$ is a lower bound for this result.\end{remark}

\begin{lemma1}
 $ \Sigma_{x \in Z_{p}^{9p-3}} P_{mn}(x) \equiv \Sigma_{x\in Z_{p}^{9p-3}} P_{rs}(x)$ for $m \neq n$ , $r\neq s$.
\end{lemma1}

\begin{proof}We will prove that $ \Sigma_{x \in Z_{p}^{9p-3}} P_{18}(x) \equiv $ $\Sigma_{x \in Z_{p}^{9p-3}} P_{17}(x)$.
And the rest can be derived from this : 

$\\$
$\Sigma_{x \in Z_{p}^{9p-3}} P_{18}(x)$ = $\Sigma_{x \in Z_{p}^{9p-3}}  \binom{g(x)-1} {p-1} Q_{1}Q_{2}Q_{3} S_{2}S_{3}...S_{7}$ 
$\\$= $\Sigma_{x \in Z_{p}^{9p-3}}  \binom{g(x)-1} {p-1} Q_{1}Q_{2}Q_{3} S_{2}...S_{6} (\binom {g(x)} {p} - 7)$
$\\$
= $ \Sigma_{x \in Z_{p}^{9p-3}} \binom{g(x)-1} {p-1} Q_{1}Q_{2}Q_{3} S_{2}S_{3}...S_{6} (\binom {g(x)} {p} - 8+1)$ 
$\\$= $\Sigma_{x \in Z_{p}^{9p-3}} P_{17}(x) +  \Sigma_{x \in Z_{p}^{9p-3}} \binom{g(x)-1} {p-1} Q_{1}Q_{2}Q_{3} S_{2}S_{3}...S_{6}$
$\\$
= $ \Sigma_{x \in Z_{p}^{9p-3}} P_{17}(x) +  \Sigma_{x \in Z_{p}^{9p-3}} R$.

$\\$
We will calculate the value of $R$ directly.

$\\$
$R = \Sigma_{1 \leqslant i \leqslant (9p-3)} a_{k_{1}...k_{9p-3}} \Pi x_{i}^{k_{i}(p-1)}$, where
$k_{i} \geqslant 0$.

$\\$
And $\Sigma_{x \in Z_{p}^{9p-3}} a_{k_{1}...k_{9p-3}} \Pi x_{i}^{k_{i}(p-1)} \equiv 0$ if one of 
the $k_{i}=0$. Hence, $R$ can be non-zero if $degree(R)$ is at least equal to $(9p-3)(p-1)$. But
in our case $degree(R)$ is $(9p-4)(p-1)$ (degree of $S_{i}$ is $p$ for all $i$, and degree$\binom{(g(x)-1} {p-1}$ = degree($Q_{i}$) = $p-1$).
So, our claim is established.
\end{proof}

\begin{proof}[\bf Proof of Theorem3]
These are some observations :
\begin{enumerate}
\item $P_{18}(x) \equiv 0$ if $g(x) \neq 0$ mod p.

\item $P_{18}(x) \equiv 0$ if $\Sigma_{1\leqslant i\leqslant (9p-3)} a_{i}x_{i}^{(p-1)} \neq 0$ mod $p$ .

\item $P_{18}(x) \equiv 0$ if $\Sigma_{1\leqslant i\leqslant (9p-3)} b_{i}x_{i}^{(p-1)} \neq 0$ mod $p$.

\item $P_{18}(x) \equiv 0$ if $\Sigma_{1\leqslant i\leqslant (9p-3)} c_{i}x_{i}^{(p-1)} \neq 0$ mod $p$.
\end{enumerate}

Define, $m$ = $\Sigma_{x \in Z_{p}^{9p-3}} P_{18}(x)$.

$\\$
After calculating directly we get,
$\\$
$\Sigma_{x \in Z_{p}^{9p-3}} P_{18}(x) \equiv P_{18}(0)+ (p-1)^{p} N^{p}(J) 6! +(p-1)^{8p} N^{8p}(J) 6!$.
$\\$
$m- P_{18}(0)\equiv  (p-1)^{p} N^{p}(J) 6! +(p-1)^{8p} N^{8p}(J) 6!$.
$\\$
Similarly, we can get a set of equations :

$\\$
$m - P_{18}(0) = a_{1} N^{p}(J) + b_{1} N^{8p}(J) $.  
$\\$
$m - P_{17}(0) = a_{2} N^{p}(J) + b_{2} N^{7p}(J) $. $[$ As, $\Sigma_{x \in Z_{p}^{9p-3}} P_{18}(x)$ = $\Sigma_{x \in Z_{p}^{9p-3}} P_{17}(x)$ by  Lemma $3$ $]$ 
$\\$

$\ldots$

$\\$
$m-P_{12}(0) = a_{7} N^{p}(J) + b_{7} N^{2p}(J) $.  $[$ As, $\Sigma_{x \in Z_{p}^{9p-3}} P_{18}(x)$ = $\Sigma_{x \in Z_{p}^{9p-3}} P_{12}(x)$ by  Lemma $3$ $]$ \\
where $a_{i} $, $b_{j}$ are integers. And the values of the $P_{1j}(0)$ are different.
So, the result follows.
\end{proof}

 If we can show that $N^{ip}(J)>0$, $2\leqslant i\leqslant 8$ for a sequence $J$ of $Z_{p}^{3}$ where $|J|=9p-3$ by using the techniques used to prove Theorem3, 
 then it  also proves that $J$ has a zero-sum of length $p$, which proves that $s_{1}(Z_{p}^{3})=9p-3$. And, this is a very good upper
 bound for $s_{1}(Z_{p}^{3})$. This is in the line of the following conjecture,
 \begin{conjecture}
  $s_{1}(Z_{p}^{3})=9p-8$ $[2]$.
 \end{conjecture}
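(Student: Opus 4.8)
The plan is to split the claim into the lower bound $s_1(Z_p^3)\ge 9p-8$ and the upper bound $s_1(Z_p^3)\le 9p-8$, and, since $s_1(Z_n^d)$ is completely multiplicative, to prove both only for primes. The lower bound is the routine half: I would exhibit a sequence of length $9p-9=9(p-1)$ in $Z_p^3$ having no subsequence of length exactly $p$ with sum $0$. This is Elsholtz's three-dimensional building block [3], a weighted family of $9(p-1)$ vectors in which the multiplicities are arranged so that no choice of exactly $p$ of them can make all three coordinate-sums vanish simultaneously; it refines the saturated construction of Theorem 2 by using all three coordinates. Exhibiting this sequence gives $s_1(Z_p^3)\ge 9p-8$ directly.

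The entire difficulty is the upper bound: every $J$ with $|J|=9p-8$ must contain a $p$-term zero-sum. I would argue by contradiction, assuming $N^p(J)=0$, and run the program that the paper carries out at length $9p-3$, now at length $9p-8$. Concretely, I would first re-derive, via the lifting technique of the Remark applied to $J$ and to its subsequences of each length $ip-3$, a triangular linear system in the counts $N^{ip}(J)$ for $0\le i\le 8$ (note $8p\le 9p-8<9p$ for $p>7$, so $i$ still runs only to $8$), and solve it under $N^p(J)=0$ to express each $N^{ip}(J)$ as an explicit affine function of the single parameter $k=N^{2p}(J)$, exactly as in the matrix computation and its four corollaries.

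With these congruences I would force a large zero-sum block and then peel, as in the Application. Running the $P_{1j}$-argument of Theorem 3 at this length gives relations of the shape $m-P_{1j}(0)=a_j N^{p}(J)+b_j N^{jp}(J)$; since $N^p(J)=0$, $b_j\not\equiv 0$, and the constants $P_{1j}(0)=8!/j$ are pairwise distinct mod $p$, at most one of $N^{2p}(J),\dots,N^{8p}(J)$ can vanish. Hence a top block is positive. If $N^{8p}(J)>0$, pick $I\subset J$ with $|I|=8p$ and $\sum I\equiv 0$, delete three elements to obtain $M$ with $|M|=8p-3$, apply Corollary 1 to find $L\subset M$ with $|L|=7p$ and $\sum L\equiv 0$, and take the complement $I\setminus L$: it has length exactly $p$ and sum $0$, contradicting $N^p(J)=0$. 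If instead $N^{8p}(J)=0$ is the single exceptional vanishing count, I would peel from $N^{7p}(J)>0$ (or $N^{6p}(J)$) using Corollary 2 (resp. Corollary 3) in the same way, so that some complement is again a $p$-term zero-sum.

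The hard part will be the re-derivation at the tighter length, and here there is a genuine obstruction rather than a routine one. At length $9p-8$ the number of variables is $9p-8$, so the Chevalley–Warning vanishing threshold for the remainder term in Lemma 3 drops to $(9p-8)(p-1)$, whereas the remainder $R$ in that argument has degree $(9p-4)(p-1)$; the key inequality that powers Theorem 3 now reverses, and the clean identities $\sum_x P_{mn}(x)\equiv\sum_x P_{rs}(x)$ fail. Tracking the degree budget shows one can retain at most about four of the factors $S_k$, which only yields relations tying $N^p(J)$ to several of the $N^{ip}(J)$ at once — too weak to isolate a single count and conclude. Overcoming this, either by designing a lower-degree polynomial that still isolates one $N^{ip}(J)$ or by a genuinely different argument, together with settling the finitely many exceptional primes $p\in\{13,17,19,47,61\}$ (where the coprimality in the corollaries breaks) and the excluded cases $p\le 7$, is the real content of the conjecture; it is precisely this degree gap that confines the present methods to $p\to\infty$ (Bhowmik and Schlage-Puchta [2]) and leaves the all-$p$ statement open.
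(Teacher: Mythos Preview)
The statement you are attempting is labeled \emph{Conjecture} in the paper, and the paper supplies no proof of it; immediately before stating it the author writes that showing $N^{ip}(J)>0$ for all $2\le i\le 8$ at length $9p-3$ would already give the weaker bound $s_1(Z_p^3)\le 9p-3$, and immediately after it that more information about $m=\Sigma_x P_{18}(x)$ would be needed to make progress. There is therefore nothing in the paper to compare your argument against.

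Your proposal is, correspondingly, not a proof either, and you say so yourself in the final paragraph. The lower-bound half is fine (it is Elsholtz's construction, as cited in the introduction). For the upper bound you outline running the paper's lifting/matrix computation and the $P_{1j}$ machinery of Theorem~3 at length $9p-8$ instead of $9p-3$, and then correctly isolate the point where this breaks: the vanishing of the remainder $R$ in the proof of the Lemma relies on $\deg R=(9p-4)(p-1)<(9p-3)(p-1)$, and with only $9p-8$ variables the right-hand side drops to $(9p-8)(p-1)$, reversing the inequality. That diagnosis is accurate and is precisely why the paper's methods stop short of the conjecture. But recognizing an obstruction is not the same as overcoming it; what you have written is an explanation of why the natural approach fails, not a proof, and no amount of peeling via the corollaries can repair the missing identity $\Sigma_x P_{mn}(x)\equiv\Sigma_x P_{rs}(x)$ once the degree bound is lost.
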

If we can say more about the sum $m=\Sigma_{x \in Z_{p}^{9p-3}} P_{18}(x)$, then it will help
us  to prove our claim.

\section*{Acknowledgements}
I would like to thank Prof. Gautami Bhowmik for her help and her enormous encouragement, without which
this paper would not have been written. I am also thankful to CEMPI-Labex for their financial
support for my Masters2. This article is a part of my master thesis.

\end{document}